\documentclass[11pt]{article}
\usepackage[affil-it]{authblk}
\usepackage[letterpaper,margin=0.9in]{geometry}
\usepackage{amsmath,amssymb,amsthm,mathtools}
\usepackage{microtype}
\usepackage{booktabs}
\usepackage{enumitem}
\usepackage[colorlinks=true,linkcolor=blue!50!black,citecolor=blue!50!black,urlcolor=blue!50!black]{hyperref}
\usepackage{xcolor}

\newtheorem{theorem}{Theorem}[section]
\newtheorem{proposition}[theorem]{Proposition}

\theoremstyle{remark}

\newcommand{\R}{\mathbb{R}}
\newcommand{\Id}{\operatorname{Id}}
\newcommand{\dd}{\,\mathrm{d}}
\newcommand{\cG}{\mathcal{G}}
\newcommand{\cC}{\mathcal{C}}
\newcommand{\Om}{\Omega}
\newcommand{\Th}{\Theta}
\newcommand{\la}{\lambda}
\newcommand{\eps}{\varepsilon}
\newcommand{\ip}[2]{\left\langle #1,#2\right\rangle}
\newcommand{\norm}[1]{\left\lVert #1\right\rVert}

\newcommand{\abs}[1]{\left\lvert #1\right\rvert}

\title{An inverse scattering problem for the nonlinear Schr\"odinger equation under partial harmonic confinement
}

\author[1]{Pranav Kumar\thanks{kumarp@mail.tsinghua.edu.cn}}
\affil[1]{Yau Mathematical Sciences Center, Tsinghua University, Beijing, China}

\author[1]{Li Li \thanks{lili19940301@mail.tsinghua.edu.cn}}

\date{}

\begin{document}
	\maketitle
	\begin{abstract}
		We study an inverse problem for a nonlinear Schr\"odinger equation with a harmonic potential in some spatial directions and free propagation in the remaining directions. The nonlinearity has an unknown power and a real, spatially dependent coefficient. For small incoming data in a weighted energy space, we construct the wave operator and show that it determines both the power and the coefficient when the latter depends either on the confined variables or on the free variables. The proof uses the first nonlinear term of the wave operator and concentrated product data. We explain why dependence on the free variables requires a stronger lower bound on the power. We also discuss the outgoing wave operator, the small-data scattering operator, and coefficients depending on all spatial variables.
	\end{abstract}
	
	\section{Introduction}
	Nonlinear Schr\"odinger equations (NLS) with harmonic potentials arise naturally
	in the study of Bose--Einstein 
	condensates, where the potential
	models trapping; see \cite{JosserandPomeau,PitaevskiiStringari}. In a partially confined
	configuration, the condensate remains 
	trapped in some spatial directions
	while it is allowed to disperse in the others. Scattering theory and existence of wave operators for NLS under partial harmonic confinement, with constant nonlinear coefficient, were established in
	\cite{AntonelliCarlesDrumond}. Spatially dependent nonlinearities also 
	appear in the Gross--Pitaevskii
	description, where the interaction strength is related to the scattering
	length and may vary in space; see
	\cite{SakaguchiMalomed,Yamazaki}.
	
	In this paper, we consider an inverse 
	scattering problem for the nonlinear
	Schr\"odinger equation under partial harmonic confinement. We are interested in recovering both the nonlinear exponent and the spatially
	dependent coefficient from the small-data incoming wave operator.
	
	More precisely, we consider
	\begin{equation}\label{eq:NLS}
		i\partial_t u
		=
		Hu+\lambda|u|^{2\sigma}u,
		\qquad
		(t,x,y)\in\R\times\R^n\times\R^m,
	\end{equation}
	where $m=d-n$, $1\le n\le d-1$, and
	\[H=H_x-\tfrac12\Delta_y,\quad H_x=-\tfrac12\Delta_x+\tfrac12|x|^2.
	\]
	Here $x\in\R^n$ denotes the confined variables, while
	$y\in\R^m$ denotes the unconfined variables. We consider separately the
	two cases
	\[
	\lambda=\lambda(x)
	\qquad\text{and}\qquad
	\lambda=\lambda(y).
	\]
	Our goal is to recover $\sigma$ and $\lambda$ from the small-data incoming
	wave operator associated with \eqref{eq:NLS}.
	
	To formulate the inverse problem, we work in the weighted energy space
	introduced in \cite{AntonelliCarlesDrumond} (see \eqref{eq:Sigma-def} below).
	For sufficiently small $f\in\Sigma$, let $u$ be the solution of
	\eqref{eq:NLS} satisfying
	\[
	\norm{e^{itH}u(t)-f}_{\Sigma}\longrightarrow0,
	\qquad t\to-\infty.
	\]
	The corresponding incoming wave operator is defined by
	\begin{equation}\label{eq:wave}
		\Omega^-_{\lambda,\sigma}(f):=u(0).
	\end{equation}
	We ask whether the knowledge of $\Omega^-_{\lambda,\sigma}$ for small
	incoming data determines the nonlinear exponent $\sigma$ and the
	coefficient $\lambda$. The following main result gives an affirmative answer to this question for both
	$\lambda=\lambda(x)$ and $\lambda=\lambda(y)$, under suitable assumptions on the nonlinear exponent and the coefficient.
	
	\begin{theorem}\label{thm:main}
		Let $\lambda_j$ be real-valued, nonzero coefficients, $j=1,2$, and let $\sigma_j>0$.
		\begin{enumerate}
			\item Suppose $\lambda_j=\lambda_j(x)\in W^{1,\infty}(\R^n)$ and
			\[\sigma_j>\frac{2d}{(d+2)m},\qquad \sigma_j<\frac2{d-2}\quad(d\ge3).\]
			\item Suppose $\lambda_j=\lambda_j(y)\in W^{1,\infty}(\R^m)$ and
			\[\sigma_j>\frac{4d}{(d+2)m},\qquad \sigma_j<\frac2{d-2}\quad(d\ge3).\]
		\end{enumerate}
		In each case assume the indicated interval is nonempty. If the two incoming wave operators agree on a common neighborhood of zero, then $\sigma_1=\sigma_2$ and $\lambda_1=\lambda_2$ (as their Lipschitz representatives).
	\end{theorem}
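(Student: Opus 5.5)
The plan is to linearize the wave operator at zero. Take incoming data $\eps f$ with $f\in\Sigma$ fixed and $\eps\to0$. Write the Duhamel formula for the incoming solution as
\[
u(0)=\eps f - i\int_{-\infty}^0 e^{isH}\big(\lambda|u|^{2\sigma}u\big)(s)\dd s .
\]
Using the small-data theory in $\Sigma$ (Strichartz estimates for $e^{-itH}$ in the $y$ directions combined with boundedness in the $x$ directions, as in \cite{AntonelliCarlesDrumond}), I will show that
\[
\Omega^-_{\lambda,\sigma}(\eps f)=\eps f-i\eps^{2\sigma+1}\int_{-\infty}^0 e^{isH}\big(\lambda|e^{-isH}f|^{2\sigma}e^{-isH}f\big)\dd s+O\big(\eps^{4\sigma+1}\big).
\]
The lower bounds on $\sigma$ are what make the time integral converge. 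The free dispersion gives $|e^{-isH}f|\lesssim |s|^{-m/2}$ in $y$, and this has to be integrable after taking $2\sigma$ powers in the relevant Strichartz or $L^\infty_s$-type norms. In case 2, derivative or $y$-weight losses arise when $\lambda$ depends on $y$, which is presumably why the stronger threshold appears.

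Pair this expansion with a test function $g\in\Sigma$. Equality of the two wave operators then gives
\[
-i\eps^{2\sigma_1+1}I_1(f,g)+O(\eps^{4\sigma_1+1})=-i\eps^{2\sigma_2+1}I_2(f,g)+O(\eps^{4\sigma_2+1}),
\]
where
\[
I_j(f,g)=\int_{-\infty}^0\!\!\int \lambda_j|e^{-isH}f|^{2\sigma_j}e^{-isH}f\,\overline{e^{-isH}g}\dd x\dd y\dd s.
\]
Suppose $\sigma_1<\sigma_2$. Choosing $f=g$ with $I_1(f,f)\neq0$ and letting $\eps\to0$ gives a contradiction, so $\sigma_1=\sigma_2=:\sigma$. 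Such an $f$ exists by the concentration step described next, since $\lambda_1\not\equiv0$. Once the exponents agree, we get $I_1=I_2$ for all $f,g$, that is, $\int(\lambda_1-\lambda_2)\,\Phi_f\dd x\dd y=0$ for a suitable family of weights $\Phi_f$.

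To recover $\lambda$ I use concentrated product data $f(x,y)=\phi(x)\psi(y)$. Then $e^{-isH}f=(e^{-isH_x}\phi)(e^{is\Delta_y/2}\psi)$.

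In case 1, take $\psi$ to be a fixed Gaussian, which reduces the $y$ and $s$ integrals to an explicit positive weight. For $\phi$, take a coherent state of width $h$ centered at $x_0$, or one of its rescalings; under the harmonic flow it stays a Gaussian moving along $x_0\cos s$. The harmonic flow is periodic, so the time integral does not localize by itself. I therefore plan to exploit the free decay in $s$: rescale $\psi$ to have width $\delta\to0$, so that $|e^{is\Delta/2}\psi|^{2\sigma+2}$ concentrates the $s$-integral near $s=0$. The pairing then converges, as $h,\delta\to0$ in a suitable order, to $c\,\lambda(x_0)$ with $c>0$. The Lipschitz regularity of $\lambda$ controls the error. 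This gives $\lambda_1(x_0)=\lambda_2(x_0)$ for every $x_0$.

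In case 2, concentrate $\psi$ at $y_0$ with width $\delta$, and take $\phi$ to be the ground state of $H_x$, which is stationary up to phase. The $s$-integral is again dominated by $|s|\lesssim\delta^2$, where $e^{is\Delta/2}\psi$ remains concentrated near $y_0$. The weight normalizes to an approximate identity, and we recover $\lambda(y_0)$.

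I expect the main obstacle to be justifying the expansion uniformly for these concentrated data. As $h,\delta\to0$, the $\Sigma$-norm of $f$ blows up, so $\eps$ must be chosen depending on $\delta$. The remainder $O(\eps^{4\sigma+1}\|f\|_\Sigma^{4\sigma+1})$ must then be shown to be smaller than the main term, which has size roughly $\eps^{2\sigma+2}\delta^{\text{power}}$. My first attempt is to take $\eps=\delta^{N}$ with $N$ large, and to bound the error in scale-adapted norms ($L^2$ plus Strichartz) rather than the full $\Sigma$-norm, wherever the small-data theory allows this.
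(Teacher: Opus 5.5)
Your inverse argument matches the paper's. You expand $\Omega^-(\eps f)$ to first nonlinear order with an $O(\eps^{4\sigma+1})$ remainder and read off $\sigma$ from the leading power; the paper uses $B_\varphi(2\eps)/B_\varphi(\eps)\to 2^{2\sigma+1}$, and your contradiction argument is equivalent. In case 1 you concentrate the free factor so that $t=\delta^2 s$ freezes the harmonic flow, then localize in $x$. In case 2 you use a Hermite eigenfunction in $x$ and concentrate at $y_0$.

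The ``main obstacle'' you single out at the end is not one. For each fixed $f\in\Sigma$, however large $\norm f_\Sigma$ is, the two wave operators agree at $\eps f$ for all $\eps$ small enough depending on $f$. Dividing by $\eps^{2\sigma+1}$ and letting $\eps\downarrow0$ gives the exact, $\eps$-free identity $I_1(f,f)=I_2(f,f)$ for every $f\in\Sigma$, which you already state. The concentration limits are then taken in this identity, which involves only the linear flow. No coupling $\eps=\delta^N$ and no scale-adapted remainder bounds are needed. Dominated convergence suffices, using $\norm{e^{is\Delta_y/2}g}_{L^p}^p\lesssim\langle s\rangle^{-m\sigma}$ with $m\sigma>1$ and $\sup_t\norm{e^{-itH_x}\psi}_{L^p}<\infty$. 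The order is $\delta\downarrow0$ first at fixed $\phi$, which gives $D^-_{\sigma,g}\int\lambda|\phi|^p\dd x$, and then $\phi$ concentrating at $x_0$. No coherent-state dynamics enters.

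The genuine gap is the forward theory in case 2, which you defer to the small-data theory of Antonelli--Carles--Drumond Silva. That theory is for constant coefficients. In case 1 the extra commutators with $A_1,A_2$ carry only the bounded factors $\cos t,\sin t$, so it goes through. In case 2, however,
\[
A_4\bigl(\lambda(y)F(u)\bigr)=\lambda A_4F(u)+it(\nabla_y\lambda)F(u),
\]
and the last term grows linearly in $t$. The norm $\norm u_Y$ in the constant-coefficient estimate carries no time weight, so this term is not bounded by $\norm u_Y^{2\sigma}\norm u_{\ell^PL^QL^R}$, and the fixed point in $X$ does not close as stated. Your remark about ``derivative or $y$-weight losses'' names the symptom but not the remedy.

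The paper's remedy is to split the factor $t$ among the $2\sigma$ nonlinear copies,
\[
|t||u|^{2\sigma+1}\le\bigl(\langle t\rangle^{1/(2\sigma)}|u|\bigr)^{2\sigma}|u|.
\]
It then bounds $\norm{\langle t\rangle^{1/(2\sigma)}u}_{\ell^\Theta L^\Omega L^K}\lesssim\norm u_X$ using the weighted Gagliardo--Nirenberg decay $\norm{u(t)}_{L^K}\lesssim\langle t\rangle^{-a}\norm u_X$, where $a=m(\frac12-\frac1K)$. The resulting series over the intervals $I_\gamma$ converges iff $(a-\frac1{2\sigma})\Theta>1$, which for the chosen exponents is exactly $\sigma>4d/((d+2)m)$.

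Relatedly, your claim that the lower bounds on $\sigma$ are what make the leading time integral converge is inaccurate. That integral needs only $m\sigma>1$. Both thresholds in the theorem are summability conditions over the $I_\gamma$: $a\Theta>1$ in case 1 and $(a-\frac1{2\sigma})\Theta>1$ in case 2. They are what make the small-data fixed point, and hence the wave operator itself, exist.
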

	The nonzero assumption is necessary: if $\lambda\equiv0$, then $\Omega^-_{0,\sigma}$ is the identity map for every $\sigma$.

	\subsection{Connection with earlier literature}
	For stationary linear Schr\"odinger scattering, one illuminates a potential at a fixed frequency (energy) and observes its scattering amplitude or scattering matrix. Recovering the potential from these data is closely related to the Calder\'on inverse boundary value problem (see \cite{SylvesterUhlmann, UhlmannVasy, PaivarintaSaloUhlmann}).
	
	Here the equation is evolutionary and nonlinear. The data are a map between asymptotic states, not measurements at one stationary frequency.
	
	The scattering theory for nonlinear Schr\"odinger equations with partial
	dispersion has been studied in several settings. Tzvetkov and Visciglia
	\cite{TzvetkovVisciglia} considered small-data scattering on product spaces
	$\R^m\times M$, while Hani and Pausader \cite{HaniPausader} studied the quintic
	defocusing NLS on $\R\times\mathbb T^2$. For NLS under partial harmonic
	confinement, Antonelli, Carles, and Drumond Silva \cite{AntonelliCarlesDrumond} established
	global-in-time Strichartz estimates and constructed wave operators. For related scattering results in similar settings, see, for instance, \cite{CarlesGallo,HaniThomann,ArdilaCarles} and the references therein.
	
	Inverse problems for nonlinear dispersive equations have been extensively
	studied. We refer to \cite{Weder97,Weder01b,CarlesGallagher,Watanabe,PausaderStrauss} and the references therein. The study of inverse scattering problems for 
	nonlinear Schr\"odinger equations goes back to 
	Strauss \cite{Strauss}, who showed that the coefficient and the power of the nonlinearity can be recovered from the small-data scattering map. This approach was further developed by Weder \cite{Weder97,Weder01a,Weder01b}, who considered the recovery of the potential and nonlinear terms from the scattering operator. Related
	inverse scattering problems for nonlocal and Hartree-type nonlinearities
	were studied in \cite{SasakiWatanabe,Sasaki07,Sasaki08,Sasaki12,Sasaki24}.
	Watanabe \cite{Watanabe} studied the recovery of a spatially dependent
	coefficient from the scattering map for nonlinear Schr\"odinger equations
	with decaying nonlinearities. Murphy \cite{Murphy}
	considered nonlinear Schr\"odinger equations 
	with nonlinearities of the form $\alpha(x)|u|^p u$ and showed that the nonlinear exponent and 
	the spatially dependent coefficient can be recovered from the scattering
	map. More recently, Killip, Murphy, and Vi{\c{s}}an \cite{KMV} showed that,
	for a general class of nonlinearities, both the wave operator and the scattering map determine the nonlinearity. The same authors \cite{KMV25}
	extended this result to a broad class of gauge-invariant, intercritical nonlinear Schr\"odinger equations.

	Another approach to inverse problems for nonlinear dispersive equations is based on the construction of approximate solutions. For nonlinear
	wave equations, S\'a Barreto and Stefanov
	\cite{SaBarretoStefanov} recovered a cubic nonlinearity in the weakly nonlinear regime. In the nonlinear Schr\"odinger setting, Hogan, Murphy and Grow \cite{HoganMurphyGrow} considered the recovery of a cubic
	spatially dependent nonlinearity, while Lee and Yu \cite{LeeYu} extended this approach to generalized higher-order Schr\"odinger equations. More recently, Lee and Pavlovi\'c \cite{LeePavlovic} studied the recovery of a spatially dependent coefficient for the cubic
	nonlinear Schr\"odinger equation in Sobolev spaces.
	
	The point of the present work is to recover both an unknown power and a variable nonlinear coefficient from the incoming wave operator for the partially confined NLS. The main feature of this model is that, the harmonic directions do
	not decay in time, while the free directions do. This forces us to distinguish the $x$ dependence from the $y$ dependence, for which a differentiated coefficient creates a factor growing like $t$.
	
	\subsection{Organization}
	
	The rest of this paper is organized in the following way. Section 2 records the linear propagator, commuting fields, mixed Strichartz norms, and the estimates used throughout. In Section 3, we prove a small-data forward result for $\lambda(x)$ and then recover $\sigma$ and $\lambda(x)$ from its wave operator. Section 4 explains the extra time-weighted estimate needed for $\lambda(y)$ and reconstructs this coefficient. Section 5 compares the two cases and gives formulas for outgoing and scattering operators, followed by the extension to $\lambda(x,y)$.
	\medskip
	
	\noindent \textbf{Acknowledgments.} L.L. would like to thank Professor Gunther Uhlmann for helpful discussions.
	
	\section{Preliminaries}
	
	\subsection{The linear evolution}
	
	The self-adjoint operator $H$ generates the unitary group
	\begin{equation}
		U(t)=e^{-itH}=e^{-itH_x}e^{it\Delta_y/2}.
		\label{eq:linear-group}
	\end{equation}
	For $t\ne0$, the free Schr\"odinger has the familiar integral formula
	\begin{equation}\label{eq:free}
		(e^{it\Delta_y/2}g)(y)=(2\pi it)^{-m/2}\int_{\R^m}e^{i|y-z|^2/(2t)}g(z)\,dz.
	\end{equation}
	For $t\notin\pi\mathbb Z$, the harmonic oscillator has the Mehler formula
	\begin{equation}\label{eq:mehler}
		(e^{-itH_x}\psi)(x)=(2\pi i\sin t)^{-n/2}\int_{\R^n}
		\exp\!\left[\frac{i}{2\sin t}\bigl((|x|^2+|z|^2)\cos t-2x\cdot z\bigr)\right]\psi(z)\,dz.
	\end{equation}
	It is well-known that, the free Schr\"odinger disperses as $|t|\to\infty$, whereas perturbations of the harmonic oscillator lead to KAM type results without corresponding large-time decay \cite{AntonelliCarlesDrumond}.
	
	The weighted energy space is
	\begin{equation}
		\Sigma
		:=
		\left\{
		f\in L^2(\R^d):
		|x|f,\;|y|f,\;\nabla_{x,y}f\in L^2(\R^d)
		\right\},
		\label{eq:Sigma-def}
	\end{equation}
	equipped with the norm
	\begin{equation}
		\norm{f}_{\Sigma}
		=\norm{\nabla_{x,y}f}_2+\norm{|x|f}_2+\norm{|y|f}_2.
		\label{eq:Sigma-norm}
	\end{equation}
	This is indeed a norm equivalent to the version with an extra $\norm f_2$: the uncertainty estimate controls $\norm f_2$ by $\norm{\nabla f}_2+\norm{|(x,y)|f}_2$. The restriction $m>0$ allows scattering through the unconfined directions.
	
	\subsection{Commuting vector fields}
	
	Set $A_0=\Id$ and
	\begin{align}
		A_1(t)&=x\sin t-i\cos t\,\nabla_x,
		&A_2(t)&=x\cos t+i\sin t\,\nabla_x,\notag\\
		A_3(t)&=-i\nabla_y,
		&A_4(t)&=y+it\nabla_y.
		\label{eq:vector-fields}
	\end{align}
	
	By \cite[Lemma~4.1]{AntonelliCarlesDrumond}, these are the Heisenberg evolutions of $-i\nabla_x,x,-i\nabla_y,y$, respectively. More precisely,
	\begin{align}
		A_1(t)&=e^{-itH}(-i\nabla_x)e^{itH},
		&A_2(t)&=e^{-itH}xe^{itH},\notag\\
		A_3(t)&=e^{-itH}(-i\nabla_y)e^{itH},
		&A_4(t)&=e^{-itH}ye^{itH}.
		\label{eq:Heisenberg}
	\end{align}
	Consequently,
	\begin{equation}
		[i\partial_t-H,A_j(t)]=0,
		\qquad
		A_j(t)U(t-s)=U(t-s)A_j(s).
		\label{eq:commutation}
	\end{equation}
	The pair $(A_1,A_2)$ is an orthogonal rotation of $(x,-i\nabla_x)$:
	\[
	\begin{pmatrix}A_2\\ A_1\end{pmatrix}
	=\begin{pmatrix}\cos t&-\sin t\\ \sin t&\cos t\end{pmatrix}
	\begin{pmatrix}x\\ -i\nabla_x\end{pmatrix}.
	\]
	Moreover, $y=A_4+tA_3$.  Consequently, at each fixed
	time, the $L^2$ norms of the $A_j(t)v$ control the corresponding $\Sigma$
	quantities.  The point of \eqref{eq:commutation} is that these same
	quantities also pass through the Duhamel propagator.
	
	Still by \cite[Lemma~4.1]{AntonelliCarlesDrumond}, for the nonlinearity
	\[
	F(u)=|u|^{2\sigma}u,
	\]
	the vector fields satisfy
	\begin{equation}
		|A_j(t)F(u)|\le C_\sigma |u|^{2\sigma}|A_j(t)u|,
		\qquad 0\le j\le4.
		\label{eq:chain-rule}
	\end{equation}
	
	For $v\in\Sigma$ and
	\[
	2\le r\le \frac{2d}{d-2}\quad \text{if } d\ge3,
	\qquad
	2\le r<\infty\quad \text{if } d=2,
	\]
	the weighted Gagliardo--Nirenberg estimate of \cite[Lemma 4.1]{AntonelliCarlesDrumond} reads
	\begin{equation}
		\norm{v}_{L^r}
		\le C_r\langle t\rangle^{-m(1/2-1/r)}
		\norm{v}_2^{1-\delta}
		\left(\sum_{j=1}^4\norm{A_j(t)v}_2\right)^\delta,
		\quad
		\delta=d\left(\frac12-\frac1r\right).
		\label{eq:weighted-GN}
	\end{equation}
	The factor $\langle t\rangle^{-m(1/2-1/r)}$ records that only the
	$m=d-n$ unconfined directions produce large-time decay. The exponent $\delta$ is the standard Gagliardo--Nirenberg interpolation exponent in dimension $d$.
	
	For a fixed $\psi\in\mathcal S(\R^n)$,
	\begin{equation}\label{eq:Lp-estimate}
		\sup_{t\in\R}\norm{e^{-itH_x}\psi}_{L^p(\R^n)}<\infty,
	\end{equation}
	where
	\[
	2\le p\le\infty \quad (n=1),\qquad
	2\le p<\infty \quad (n=2),\qquad
	2\le p\le\frac{2n}{n-2}\quad (n\ge3).
	\]
	Indeed, by \eqref{eq:vector-fields}--\eqref{eq:Heisenberg} and the
	$L^2$-unitarity of $e^{-itH_x}$,
	\[
	\norm{\nabla_x e^{-itH_x}\psi}_{L^2}^2
	+\norm{x e^{-itH_x}\psi}_{L^2}^2
	=
	\norm{\nabla_x\psi}_{L^2}^2+\norm{x\psi}_{L^2}^2.
	\]
	Hence \eqref{eq:Lp-estimate} follows from the Sobolev embedding.
	
	\subsection{Admissible exponents and mixed spaces}
	
	For an integer $D\ge1$, a pair $(a,r)$ is called $D$-admissible if
	\begin{equation}
		2\le a,r\le\infty,
		\qquad
		\frac2a=D\left(\frac12-\frac1r\right),
		\label{eq:admissible}
	\end{equation}
	or equivalently,
	\[
	\frac2a+\frac Dr=\frac D2,
	\]
	with the usual exclusion $(a,r,D)=(2,\infty,2)$.  Choose $(P,Q,R)$ so that
	\begin{equation}
		(Q,R)\text{ is $d$-admissible},
		\qquad
		(P,R)\text{ is $m$-admissible}.
		\label{eq:PQR}
	\end{equation}
	The exponent $Q$ measures time integrability on one bounded interval, while
	$P$ measures summability over all such intervals.  Put
	\[
	I_\gamma=\pi[\gamma-1,\gamma+1),\qquad \gamma\in\mathbb Z,
	\]
	and define
	\begin{equation}
		\norm{v}_{\ell^PL^QL^R(J)}
		=\left(\sum_{\gamma\in\mathbb Z}
		\norm{v}_{L^Q(I_\gamma\cap J;L^R(\R^d))}^P\right)^{1/P},
		\label{eq:mixed-norm}
	\end{equation}
	with the usual supremum interpretation when $P=\infty$.
	
	Choose $(\Th,\Omega,K)$ so that
	\begin{equation}
		\frac1{P'}=\frac1P+\frac{2\sigma}{\Th},
		\qquad
		\frac1{Q'}=\frac1Q+\frac{2\sigma}{\Omega},
		\qquad
		\frac1{R'}=\frac1R+\frac{2\sigma}{K}.
		\label{eq:Holder-relations}
	\end{equation}
	
	For a time interval $J\subset\R$, define
	\begin{align}
		\norm{u}_{X(J)}
		&:=\sum_{j=0}^4\left[
		\norm{A_ju}_{L^\infty(J;L^2)}
		+\norm{A_ju}_{\ell^PL^QL^R(J)}\right],
		\label{eq:X-space}\\
		\norm{G}_{N(J)}
		&:=\sum_{j=0}^4\norm{A_jG}_{\ell^{P'}L^{Q'}L^{R'}(J)},
		\qquad
		\norm{G}_{N_0(J)}:=\norm{G}_{\ell^{P'}L^{Q'}L^{R'}(J)},
		\label{eq:N-spaces}\\
		\norm{u}_{Y(J)}&:=\norm{u}_{\ell^\Th L^\Omega L^K(J)},
		\label{eq:Y-space}\\
		d_J(u,v)&:=\norm{u-v}_{\ell^PL^QL^R(J)}.
		\label{eq:weak-metric}
	\end{align}
	We will see in later sections that, the solution ball will be bounded in the strong $X$ norm, whereas the
	fixed-point contraction will use the weaker metric $d_J$.
	
	\subsection{The embedding of \texorpdfstring{$X$ into $Y$}{X into Y}}
	
	Set
	\begin{equation}
		a=m\left(\frac12-\frac1K\right).
		\label{eq:a-delta}
	\end{equation}
	Applying \eqref{eq:weighted-GN} with $r=K$ and using the
	$L^\infty_tL^2$ part of $X(J)$ gives
	\begin{equation}
		\norm{u(t)}_{L^K}
		\le C\langle t\rangle^{-a}\norm{u}_{X(J)}.
		\label{eq:pointwise-decay}
	\end{equation}
	We now integrate this pointwise-in-time estimate over one interval.  Since $I_\gamma$ has length $2\pi$,
	\begin{align}
		\norm{u}_{L^\Omega(I_\gamma\cap J;L^K)}
		&\le C\norm{\langle t\rangle^{-a}}_{L^\Omega(I_\gamma\cap J)}
		\norm{u}_{X(J)}\notag\\
		&\le C(2\pi)^{1/\Omega}
		\sup_{t\in I_\gamma}\langle t\rangle^{-a}\norm{u}_{X(J)}\notag\\
		&\le C\langle\gamma\rangle^{-a}\norm{u}_{X(J)}.
		\label{eq:local-Y}
	\end{align}
	Taking the $\ell^\Th$ norm, we obtain
	\begin{align*}
		\norm{u}_{Y(J)}^\Theta
		&=\sum_{\gamma\in\mathbb Z}
		\norm{u}_{L^\Omega(I_\gamma\cap J;L^K)}^\Theta\\
		&\le C\norm{u}_{X(J)}^\Theta
		\sum_{\gamma\in\mathbb Z}\langle\gamma\rangle^{-a\Theta}.
	\end{align*}
	The last numerical series converges exactly when
	\begin{equation}
		a\Th=m\left(\frac12-\frac1K\right)\Th>1.
		\label{eq:ordinary-summability}
	\end{equation}
	
	This shows that under the condition \eqref{eq:ordinary-summability}, we have the embedding
	\begin{equation}
		\norm{u}_{Y(J)}\le C\norm{u}_{X(J)}
		\label{eq:Y-embedding}.
	\end{equation}
	This embedding will be applied directly in Section~3.  Section~4
	requires a strengthened version with an additional time weight.
	
	Concrete choices of the exponents are recorded below (see \cite[Section 4]{AntonelliCarlesDrumond}).
	
	When $d=2$ and $n=m=1$, one can take $R=2+\eta$ with
	$\eta>0$ small and
	\begin{equation}
		K=\frac{2\sigma(2+\eta)}{\eta},\quad
		P=\frac{4(2+\eta)}{\eta},\quad
		Q=\frac{2(2+\eta)}{\eta},\quad
		\Th=\frac{2\sigma(4+2\eta)}{4+\eta},\quad
		\Omega=(2+\eta)\sigma.
		\label{eq:d2-exponents}
	\end{equation}
	For $d\ge3$, a convenient choice is
	\begin{align}
		K&=\frac{2d}{d-2},
		&R&=\frac{2d}{d-(d-2)\sigma},
		&P&=\frac{4d}{m\sigma(d-2)},
		&Q&=\frac4{\sigma(d-2)},\notag\\
		\Th&=\frac{4d\sigma}{2d-m(d-2)\sigma},
		&\Omega&=\frac{4\sigma}{2-(d-2)\sigma}.
		\label{eq:high-d-exponents}
	\end{align}
	With these exponents, \eqref{eq:ordinary-summability} is equivalent to
	\begin{equation}
		\sigma>\frac{2d}{(d+2)m}.
		\label{eq:ordinary-lower}
	\end{equation}
	
	\subsection{Strichartz estimates}
	
	The global Strichartz estimate under partial harmonic confinement   (\cite[Theorem~3.4, (3.4)]{AntonelliCarlesDrumond}), together
	with \eqref{eq:commutation} and the $L^2$-unitarity of $U(\cdot)$, gives 
	\begin{equation}
		\norm{U(\cdot)f}_{X(\R)}\le C\norm f_\Sigma.
		\label{eq:hom-Strichartz}
	\end{equation}
	
	The retarded inhomogeneous estimates that will be used later are
	\begin{align}
		\norm{\int_{-\infty}^tU(t-s)G(s)\dd s}_{X(\R)}
		&\le C\norm G_{N(\R)},
		\label{eq:strong-retarded}\\
		\norm{\int_{-\infty}^tU(t-s)G(s)\dd s}_{\ell^PL^QL^R(\R)}
		&\le C\norm G_{N_0(\R)}.
		\label{eq:weak-retarded}
	\end{align}
	We remark that although \cite[Theorem~3.4, (3.6)]{AntonelliCarlesDrumond} states with $\int_0^t$, integrating from $-\infty$ instead is essentially not a different Strichartz estimate. It is a consequence of the standard truncation and completeness argument (see for instance, \cite[Corollary 2.3.6]{Cazenave}).
	
	We will also use the dual homogeneous estimate on the negative half-line:
	\begin{equation}
		\norm{\int_{-\infty}^0U(-t)G(t)\dd t}_{L^2}
		\le C\norm G_{N_0(( -\infty,0])},
		\label{eq:dual-negative}
	\end{equation}
	which follows from \cite[Theorem~3.4, (3.5)]{AntonelliCarlesDrumond} regarding $G$ as its zero extension to $\R$.

	\section{The coefficient depends on the confined variable:
		\texorpdfstring{$\la=\la(x)$}{lambda = lambda(x)}}
	
	Throughout this section, assume
	\begin{equation}
		\la\in W^{1,\infty}(\R^n;\R),
		\qquad
		\frac{2d}{(d+2)m}<\sigma,
		\qquad
		\sigma<\frac2{d-2}\quad(d\ge3).
		\label{eq:x-assumptions}
	\end{equation}
	For $d=2$, $n=m=1$, the lower bound is simply $\sigma>1$.
	
	\subsection{Coefficient commutators and nonlinear estimates}
	
	Write $\cG(u)=\la(x)F(u)$.  Since $A_3,A_4$ act only in $y$,
	\begin{equation}
		A_j\cG(u)=\la(x)A_jF(u),\qquad j=3,4.
		\label{eq:x-free-fields}
	\end{equation}
	For the confined fields, the product rule gives
	\begin{align}
		A_1\cG(u)
		&=\la A_1F(u)-i\cos t\,(\nabla_x\la)F(u),
		\label{eq:x-A1}\\
		A_2\cG(u)
		&=\la A_2F(u)+i\sin t\,(\nabla_x\la)F(u).
		\label{eq:x-A2}
	\end{align}
	Compared with the constant $\lambda$ case studied in \cite{AntonelliCarlesDrumond}, the new factors $\sin t$ and $\cos t$ are harmless since they are uniformly bounded.
	
	Fix $B\in\{A_0,A_1,A_2,A_3,A_4\}$.  By \eqref{eq:chain-rule}, on a fixed
	time interval $I_\gamma$,
	\[
	\abs{BF(u)}\le C|u|^{2\sigma}|Bu|.
	\]
	First apply H\"older in space, using the third relation in \eqref{eq:Holder-relations}:
	\begin{align}
		\norm{BF(u)(t)}_{L^{R'}}
		&\le C\norm{|u(t)|^{2\sigma}}_{L^{K/(2\sigma)}}
		\norm{Bu(t)}_{L^R}\notag\\
		&=C\norm{u(t)}_{L^K}^{2\sigma}\norm{Bu(t)}_{L^R} \label{eq:space-Holder}.
	\end{align}
	Next apply H\"older in time on $I_\gamma\cap J$, using the second relation:
	\begin{align}
		\norm{BF(u)}_{L^{Q'}(I_\gamma\cap J;L^{R'})}
		\le C
		\norm{u}_{L^\Omega(I_\gamma\cap J;L^K)}^{2\sigma}
		\norm{Bu}_{L^Q(I_\gamma\cap J;L^R)} \label{eq:time-Holder}.
	\end{align}
	Finally set
	\[
	a_\gamma=\norm{u}_{L^\Omega(I_\gamma\cap J;L^K)},
	\qquad
	b_\gamma=\norm{Bu}_{L^Q(I_\gamma\cap J;L^R)}.
	\]
	Applying H\"older for sequences, using the first relation in
	\eqref{eq:Holder-relations}, gives
	\begin{align}
		\norm{BF(u)}_{\ell^{P'}L^{Q'}L^{R'}(J)}
		&\le C\norm{a_\gamma^{2\sigma}b_\gamma}_{\ell^{P'}}\notag\\
		&\le C\norm{a}_{\ell^\Theta}^{2\sigma}
		\norm{b}_{\ell^P}\notag\\
		&=C\norm{u}_{Y(J)}^{2\sigma}
		\norm{Bu}_{\ell^P L^Q L^R(J)}.\label{eq:sequence-Holder}
	\end{align}
	
	We remark that the inequality above was claimed in the proof of Theorem 1.5 in \cite[Section 4]{AntonelliCarlesDrumond}, although there are no detailed derivations at that place. We add the details here for self-completeness.
	
	Sum over $B$ and use \eqref{eq:Y-embedding} to obtain
	\[\norm{F(u)}_{N(J)}
	\le C\norm{u}_{X(J)}^{2\sigma+1}.\]

	Also note that, for any $z,w\in\mathbb C$ and $\sigma>0$,
	\begin{equation}\label{eq:point-lip}
		\abs{|z|^{2\sigma}z-|w|^{2\sigma}w}
		\le C_\sigma(|z|^{2\sigma}+|w|^{2\sigma})|z-w|.
	\end{equation}
	Applying exactly the same three H\"older inequalities, now to
	\eqref{eq:point-lip}, yields
	\[
	\norm{F(u)-F(v)}_{N_0(J)}
	\le C\bigl(\norm{u}_{X(J)}^{2\sigma}
	+\norm{v}_{X(J)}^{2\sigma}\bigr)d_J(u,v).
	\]
	For example, the contribution containing $u$ is
	\begin{align*}
		&\norm{|u|^{2\sigma}(u-v)}_{\ell^{P'}L^{Q'}L^{R'}(J)}\\
		&\qquad\le
		\norm{u}_{\ell^\Theta L^\Omega L^K(J)}^{2\sigma}
		\norm{u-v}_{\ell^P L^Q L^R(J)}
		\le C\norm{u}_{X(J)}^{2\sigma}d_J(u,v),
	\end{align*}
	and the term containing $v$ is identical.
	
	Combining \eqref{eq:x-A1}--\eqref{eq:x-A2} with the estimates above for $F$, we obtain 
	\begin{equation}
		\norm{\cG(u)}_{N(J)}
		\le C\norm\la_{W^{1,\infty}}
		\norm u_{X(J)}^{2\sigma+1},
		\label{eq:x-strong-nonlinear}
	\end{equation}
	and 
	\begin{equation}
		\norm{\cG(u)-\cG(v)}_{N_0(J)}
		\le C\norm\la_\infty
		\bigl(\norm u_{X(J)}^{2\sigma}+\norm v_{X(J)}^{2\sigma}\bigr)
		d_J(u,v).
		\label{eq:x-weak-difference}
	\end{equation}
	
	Notice that the left-hand side of the weak estimate
	\eqref{eq:x-weak-difference} is $N_0$, not the differentiated space $N$.  This is
	deliberate: it will be used later for contraction in $d_J$.

	\subsection{Fixed point and incoming wave operator}
	
	\begin{proposition}\label{prop:xforward}
		Under the assumptions of this section, for sufficiently small $f\in\Sigma$, the equation
		\begin{equation}\label{eq:incoming-duhamel}
			u(t)=U(t)f-i\int_{-\infty}^{t}
			U(t-s)\cG(u(s))\dd s.
		\end{equation}
		has a unique small solution in $X(\R)$. It satisfies \begin{equation}\label{eq:incoming}
			\norm{U(-t)u(t)-f}_{\Sigma}\longrightarrow0\quad(t\to-\infty),
		\end{equation}
		and
		\begin{equation}
			\Om^-_{\la(x),\sigma}(f)
			=f-i\int_{-\infty}^0U(-t)
			[\la(x)|u(t)|^{2\sigma}u(t)]\dd t.
			\label{eq:x-wave-formula}
		\end{equation}
		Moreover, for $f=\varepsilon\varphi$ with fixed $\varphi\in\Sigma$ and sufficiently small $\varepsilon> 0$, the corresponding solution $u_\varepsilon$ of \eqref{eq:incoming-duhamel} satisfies
		\begin{align}
			\norm{u_\varepsilon}_X&\le C_\varphi\varepsilon,
			\label{eq:x-small-solution}\\
			d_\R(u_\varepsilon,\varepsilon U(\cdot)\varphi)
			&\le C_{\varphi,\la,\sigma}\varepsilon^{2\sigma+1}.
			\label{eq:x-linear-approx}
		\end{align}
	\end{proposition}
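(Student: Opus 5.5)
We run a contraction argument for the Duhamel map
\[
\Phi(u)(t)=U(t)f-i\int_{-\infty}^tU(t-s)\cG(u(s))\dd s
\]
on the ball $B_\rho=\{u:\norm u_{X(\R)}\le\rho\}$, with contraction measured in the weak metric $d_\R$.

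\emph{Self-map.} By \eqref{eq:hom-Strichartz}, \eqref{eq:strong-retarded} and \eqref{eq:x-strong-nonlinear},
\[
\norm{\Phi(u)}_{X(\R)}\le C\norm f_\Sigma+C\norm\la_{W^{1,\infty}}\rho^{2\sigma+1}.
\]
Choose $\rho=2C\norm f_\Sigma$ with $\norm f_\Sigma$ small enough that $C\norm\la_{W^{1,\infty}}\rho^{2\sigma}\le 1/2$. Then $\Phi(B_\rho)\subset B_\rho$.

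\emph{Contraction.} By \eqref{eq:weak-retarded} and \eqref{eq:x-weak-difference},
\[
d_\R(\Phi u,\Phi v)\le C\norm\la_\infty\rho^{2\sigma}d_\R(u,v)\le\tfrac12 d_\R(u,v)
\]
after shrinking $\rho$ further.

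\emph{Completeness.} This is the step needing care: we must check that $(B_\rho,d_\R)$ is a complete metric space. Take a $d_\R$-Cauchy sequence in $B_\rho$. It converges in $\ell^PL^QL^R$ to some $u$. Each $A_ju_k$ is bounded in $L^\infty L^2$ and in the mixed Strichartz space, and these spaces are duals of separable spaces since the exponents are finite (or $L^\infty$ in time). So some subsequence of $A_ju_k$ converges weak-$*$, and its limit must be $A_ju$ in the distributional sense. Lower semicontinuity of norms then gives $\norm u_X\le\rho$. The Banach fixed point theorem yields a unique fixed point in $B_\rho$, which is the "unique small solution".

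\emph{Asymptotics.} Applying $U(-t)$ to the Duhamel formula gives
\[
U(-t)u(t)-f=-i\int_{-\infty}^tU(-s)\cG(u(s))\dd s.
\]
We estimate its $\Sigma$ norm. Commuting the vector fields through via \eqref{eq:commutation} at time $t$, and noting that $A_j(0)$ ranges over $x,\ -i\nabla_x,\ -i\nabla_y,\ y$ (these control the $\Sigma$ norm), this norm is bounded by the $L^\infty L^2$ part of \eqref{eq:strong-retarded} applied to $G\mathbf 1_{(-\infty,t]}$. That gives the bound
\[
C\norm{\cG(u)}_{N((-\infty,t])}.
\]
This tends to $0$ as $t\to-\infty$ because $\norm{\cG(u)}_{N(\R)}<\infty$ and the $\ell^{P'}L^{Q'}L^{R'}$ norms have finite exponents, so dominated convergence applies to the tails. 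Setting $t=0$ gives \eqref{eq:x-wave-formula}. The integral converges in $L^2$ by \eqref{eq:dual-negative}, and in $\Sigma$ by the same argument.

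\emph{Estimates for $f=\varepsilon\varphi$.} Here $\rho=2C\varepsilon\norm\varphi_\Sigma$, which gives \eqref{eq:x-small-solution}. For \eqref{eq:x-linear-approx}, apply \eqref{eq:weak-retarded}, then the single-function bound $\norm{\cG(u)}_{N_0}\le\norm\la_\infty\norm u_Y^{2\sigma}\norm u_{\ell^PL^QL^R}$ obtained from \eqref{eq:sequence-Holder}, then \eqref{eq:Y-embedding}:
\[
d_\R(u_\varepsilon,\varepsilon U\varphi)\le C\norm\la_\infty\norm{u_\varepsilon}_X^{2\sigma+1}\le C\varepsilon^{2\sigma+1}.
\]

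The completeness of the ball under the weak metric, together with the justification of the $-\infty$ endpoint, is the main technical point. Everything else follows directly from the stated Strichartz and nonlinear estimates.
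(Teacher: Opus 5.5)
Your proposal is correct and follows essentially the same route as the paper: a Kato-type contraction on an $X$-ball in the weak metric $d_\R$, closedness of the ball via weak-$*$ compactness and lower semicontinuity, decay of the far-left $\ell^{P'}$ tail for the incoming asymptotics, and the weak retarded estimate for the linear approximation. The differences are only cosmetic and both variants are valid. You control $A_j(0)(U(-t)u(t)-f)$ through the $L^\infty L^2$ component of \eqref{eq:strong-retarded} applied to $G\mathbf 1_{(-\infty,t]}$ instead of the dual estimate \eqref{eq:dual-negative}, and you bound $\norm{\cG(u_\varepsilon)}_{N_0}$ directly with $\norm\la_\infty$ rather than passing through the $N$ norm.
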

	\begin{proof}
		For prescribed incoming state $f\in\Sigma$, define
		\begin{equation}
			\Phi_f(u)(t)
			=U(t)f-i\int_{-\infty}^tU(t-s)\cG(u(s))\dd s.
			\label{eq:x-fixed-map}
		\end{equation}
		Equations \eqref{eq:hom-Strichartz}, \eqref{eq:strong-retarded}, and
		\eqref{eq:x-strong-nonlinear} imply
		\begin{equation}
			\norm{\Phi_f(u)}_X
			\le C_0\norm f_\Sigma
			+C_1\norm\la_{W^{1,\infty}}\norm u_X^{2\sigma+1}.
			\label{eq:x-map-bound}
		\end{equation}
		Similarly, \eqref{eq:weak-retarded} and \eqref{eq:x-weak-difference} imply
		\begin{equation}
			d_\R(\Phi_f(u),\Phi_f(v))
			\le C_2\norm\la_\infty
			(\norm u_X^{2\sigma}+\norm v_X^{2\sigma})d_\R(u,v).
			\label{eq:x-contraction}
		\end{equation}
		
		Put $M=2C_0\norm f_\Sigma$ and
		\[
		B_M=\{u\in X(\R):\norm u_X\le M\}.
		\]
		If $\norm f_\Sigma$ is small enough, then
		\[
		C_1\norm\la_{W^{1,\infty}}M^{2\sigma}\le\frac12,
		\qquad
		2C_2\norm\la_\infty M^{2\sigma}<1.
		\]
		Therefore $\Phi_f$ maps $B_M$ into itself and is a contraction for $d_\R$.
		
		For completeness, the ball $B_M$ is closed in this weaker metric following Kato’s standard argument. Indeed, if $\{u_k\}$ is $d_\R$-Cauchy, then it converges
		strongly in $\ell^PL^QL^R$.  Its uniform $X$ bound gives weak or weak-star
		convergent subsequences for every $A_ju_k$ in the component spaces of $X$.
		The distributional closedness of $A_j$ identifies these limits with
		$A_ju$, and the lower semicontinuity gives 
		\[\norm{u}_{X}\le\liminf_{n\to\infty}\norm{u_n}_{X}\leq M.\]
		Hence the
		fixed point $u\in B_M$ exists and is unique in the small ball.
		
		Next, we verify the incoming asymptotic condition. From
		\eqref{eq:incoming-duhamel},
		\[
		U(-t)u(t)-f
		=
		-i\int_{-\infty}^{t}U(-s)\cG(u(s))\,\dd s.
		\]
		Using
		\[
		A_j(0)U(-s)=U(-s)A_j(s),
		\]
		together with the dual Strichartz estimate, we obtain, for
		$j=0,\ldots,4$,
		\[
		\norm{A_j(0)(U(-t)u(t)-f)}_{L^2}
		\le
		C\norm{A_j\cG(u)}
		{\ell^{P'}L^{Q'}L^{R'}((-\infty,t])}.
		\]
		Since $\cG(u)\in N(\R)$, for each $j=0,\ldots,4$,
		\[
		\norm{A_j\cG(u)}
		{\ell^{P'}L^{Q'}L^{R'}(\R)}<\infty.
		\]
		By the definition of the $\ell^{P'}$ norm and the fact that $P'<\infty$,
		the far-left tail of this summable sequence tends to zero. Hence
		\[
		\norm{A_j\cG(u)}
		{\ell^{P'}L^{Q'}L^{R'}((-\infty,t])}
		\longrightarrow0
		\qquad (t\to-\infty).
		\]
		Therefore
		\[
		\norm{A_j(0)(U(-t)u(t)-f)}_{L^2}
		\longrightarrow0
		\qquad (t\to-\infty).
		\]
		Recall that
		\[
		A_0(0)=\Id,\qquad
		A_1(0)=-i\nabla_x,\qquad
		A_2(0)=x,\qquad
		A_3(0)=-i\nabla_y,\qquad
		A_4(0)=y,
		\]
		we conclude that
		\[
		\norm{U(-t)u(t)-f}_{\Sigma}
		\longrightarrow0
		\qquad (t\to-\infty).
		\]
		
		Evaluating \eqref{eq:incoming-duhamel} at zero proves \eqref{eq:x-wave-formula}.
		
		Finally, \eqref{eq:x-small-solution} directly follows from \eqref{eq:x-map-bound} for sufficiently small $\varepsilon> 0$. Then \eqref{eq:x-linear-approx} follows from \eqref{eq:x-strong-nonlinear}:
		\begin{align*}
			d_\R(u_\eps,\eps U(\cdot)\varphi)
			&\le C\norm{\cG(u_\eps)}_{N_0}\\
			&\le C\norm{\cG(u_\eps)}_{N}
			\le C\norm\la_{W^{1,\infty}}\norm{u_\eps}_{X}^{2\sigma+1}
			\le C\eps^{2\sigma+1}.
		\end{align*}
	\end{proof}
	
	\subsection{Recovery of \texorpdfstring{$\la(x)$}{lambda(x)}}
	We adopt the complex inner product convention
	$\ip fg=\int_{\R^d}f\overline{g}$.
	For $\varphi\in \Sigma$, define
	\begin{equation}
		\cC^-_{\la(x),\sigma}(\varphi)
		=\int_{-\infty}^0\int_{\R^n\times\R^m}
		\la(x)|U(t)\varphi(x,y)|^{2\sigma+2}\dd x\dd y\dd t.
		\label{eq:x-leading-functional}
	\end{equation}
	The integral is absolutely convergent because the $(2\sigma+2)$ power of
	\eqref{eq:weighted-GN} (with $r= 2\sigma+2$) decays like $\langle t\rangle^{-m\sigma}$, and
	\eqref{eq:x-assumptions} implies $m\sigma>1$.
	
	\begin{proposition}
		For fixed $\varphi\in\Sigma$, as $\varepsilon\downarrow0$,
		\begin{align}
			\Om^-_{\la(x),\sigma}(\varepsilon\varphi)
			={}&\varepsilon\varphi
			-i\varepsilon^{2\sigma+1}\int_{-\infty}^0U(-t)
			[\la(x)|U(t)\varphi|^{2\sigma}U(t)\varphi]\dd t
			+R_\varepsilon,
			\label{eq:x-Born}
		\end{align}
		where
		\begin{equation}
			\norm{R_\varepsilon}_2
			\le C_{\varphi,\la,\sigma}\varepsilon^{4\sigma+1}.
			\label{eq:x-remainder}
		\end{equation}
		Consequently, for
		\begin{equation}
			B_\varphi(\varepsilon)
			:=i\ip{(\Om^-_{\la(x),\sigma}-\Id)(\varepsilon\varphi)}{\varphi},
			\label{eq:x-B-def}
		\end{equation}
		one has
		\begin{equation}
			B_\varphi(\varepsilon)
			=\varepsilon^{2\sigma+1}\cC^-_{\la(x),\sigma}(\varphi)
			+O(\varepsilon^{4\sigma+1}).
			\label{eq:x-B-expansion}
		\end{equation}
	\end{proposition}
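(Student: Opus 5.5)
We start from the wave operator formula \eqref{eq:x-wave-formula} with $f=\varepsilon\varphi$ and $u=u_\varepsilon$. The remainder is then
\[
R_\varepsilon=-i\int_{-\infty}^0U(-t)\,\la(x)\bigl[F(u_\varepsilon(t))-F(\varepsilon U(t)\varphi)\bigr]\dd t,
\]
since $F(\varepsilon U\varphi)=\varepsilon^{2\sigma+1}|U\varphi|^{2\sigma}U\varphi$. We bound it in three steps.

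\textbf{Step 1 (dual estimate).} Apply the dual homogeneous estimate \eqref{eq:dual-negative}:
\[
\norm{R_\varepsilon}_2\le C\norm{\la(F(u_\varepsilon)-F(\varepsilon U\varphi))}_{N_0((-\infty,0])}.
\]

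\textbf{Step 2 (weak difference estimate).} Bound the right-hand side by the weak difference estimate \eqref{eq:x-weak-difference} on $J=\R$:
\[
\norm{R_\varepsilon}_2\le C\norm\la_\infty\bigl(\norm{u_\varepsilon}_X^{2\sigma}+\norm{\varepsilon U\varphi}_X^{2\sigma}\bigr)\,d_\R(u_\varepsilon,\varepsilon U\varphi).
\]

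\textbf{Step 3 (insert the bounds).} By \eqref{eq:x-small-solution} and \eqref{eq:hom-Strichartz}, both $X$ norms are $O(\varepsilon)$, so the bracket is $O(\varepsilon^{2\sigma})$. By \eqref{eq:x-linear-approx}, the metric factor is $O(\varepsilon^{2\sigma+1})$. Multiplying gives $\norm{R_\varepsilon}_2\le C\varepsilon^{4\sigma+1}$, which is \eqref{eq:x-remainder}.

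The only point needing care is that \eqref{eq:x-weak-difference} was derived for two functions in $X$ with the H\"older chain through $Y$. The free wave $\varepsilon U\varphi$ lies in $X(\R)$ by \eqref{eq:hom-Strichartz}, so the estimate applies. Restricting to $(-\infty,0]$ only decreases the norms.

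For \eqref{eq:x-B-expansion}, pair \eqref{eq:x-Born} with $\varphi$ and multiply by $i$. Using $i\cdot(-i)=1$ and the unitarity of $U$, the first-order term becomes
\[
\varepsilon^{2\sigma+1}\int_{-\infty}^0\ip{\la|U(t)\varphi|^{2\sigma}U(t)\varphi}{U(t)\varphi}\dd t
=\varepsilon^{2\sigma+1}\cC^-_{\la(x),\sigma}(\varphi).
\]
Moving $U(-t)$ across the pairing and exchanging the time integral with the inner product is justified by the absolute convergence noted after \eqref{eq:x-leading-functional}. Alternatively, test the weak $L^2$-valued integral (which converges by \eqref{eq:dual-negative}) against $\varphi$.

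The $\varepsilon\varphi$ term of \eqref{eq:x-Born} is cancelled by $-\Id$. The remainder contributes $|\ip{R_\varepsilon}{\varphi}|\le\norm{R_\varepsilon}_2\norm\varphi_2=O(\varepsilon^{4\sigma+1})$.

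The main, mild obstacle is justifying this interchange. Everything else is a direct assembly of \eqref{eq:dual-negative}, \eqref{eq:x-weak-difference}, and Proposition~\ref{prop:xforward}.
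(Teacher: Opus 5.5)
Your proposal is correct and follows the paper's proof essentially step for step. You bound the remainder using the dual estimate \eqref{eq:dual-negative}, the weak difference estimate \eqref{eq:x-weak-difference}, and the bounds \eqref{eq:x-small-solution}--\eqref{eq:x-linear-approx}, then obtain the expansion of $B_\varphi$ by pairing with $\varphi$ and using unitarity. Your remark that the time integral should be read weakly, via \eqref{eq:dual-negative}, when pairing with $\varphi$ is a useful clarification that the paper leaves implicit.
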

	
	\begin{proof}
		By \eqref{eq:x-wave-formula},
		\[
		R_\varepsilon=-i\int_{-\infty}^0U(-t)
		[\cG(u_\varepsilon)-\cG(\varepsilon U(t)\varphi)]\dd t.
		\]
		By \eqref{eq:dual-negative}, \eqref{eq:x-weak-difference},
		\eqref{eq:x-small-solution}, and \eqref{eq:x-linear-approx},
		\begin{align*}
			\norm{R_\varepsilon}_2
			&\le C\norm\la_\infty
			(\norm{u_\varepsilon}_X^{2\sigma}
			+\norm{\varepsilon U(\cdot)\varphi}_X^{2\sigma})
			d_\R(u_\varepsilon,\varepsilon U(\cdot)\varphi)\\
			&\le C_{\varphi,\la,\sigma}\varepsilon^{4\sigma+1}.
		\end{align*}
		Pairing the leading term with $\varphi$ and using the unitarity of $U(t)$ proves
		\eqref{eq:x-B-expansion}.
	\end{proof}
	
	For the recovery of $\sigma$, note that whenever $\cC^-_{\la(x),\sigma}(\varphi)\ne0$,
	\begin{equation}
		L_\varphi:=\lim_{\varepsilon\downarrow0}
		\frac{B_\varphi(2\varepsilon)}{B_\varphi(\varepsilon)}
		=2^{2\sigma+1},
		\qquad
		\sigma=\frac12\left(\log_2L_\varphi-1\right).
		\label{eq:x-sigma-recovery}
	\end{equation}
	This can be viewed as an analogue of the amplitude-ratio argument of Murphy \cite[Corollary~4.2]{Murphy}. 
	
	Once $\sigma$ is determined, the next goal is the recovery of $\lambda(x)$.
	
	For convenience, we put $p=2\sigma+2$. We fix $0\ne g\in\mathcal S(\R^m)$. A specific choice could be the Gaussian (see for instance, \cite[Section 4]{Murphy}). For arbitrary
	$\psi\in\mathcal S(\R^n)$ and the parameter $h> 0$, set
	\begin{equation}
		g_h(y)=h^{-m/2}g(y/h),
		\qquad
		\varphi_{h,\psi}(x,y)=\psi(x)g_h(y).
		\label{eq:x-probe}
	\end{equation}
	By the scaling property of the free propagator (see for instance, \cite[Remark 2.2.2]{Cazenave}),
	\[(e^{it\Delta_y/2}g_h)(y)= h^{-m/2}(e^{i\frac{t}{h^2}\Delta_y/2}g)(\frac{y}{h}).\]
	With the substitution $t=h^2s$ in \eqref{eq:x-leading-functional}, we obtain
	\begin{align}
		h^{m\sigma-2}\cC^-_{\la(x),\sigma}(\varphi_{h,\psi})
		=\int_{-\infty}^0
		&\norm{e^{is\Delta_y/2}g}_{L^p_y}^p\notag\\[-1mm]
		&\times\left[\int_{\R^n}\la(x)
		|e^{-ih^2sH_x}\psi(x)|^p\dd x\right]\dd s.
		\label{eq:x-scaled-functional}
	\end{align}
	Note that
	\[
	\norm{e^{is\Delta_y/2}g}_{L^p}^p\le C_g\langle s\rangle^{-m\sigma}
	\]
	and $m\sigma>1$. Moreover, for each fixed Schwartz function $\psi$, by
	\eqref{eq:Lp-estimate},
	\[
	\sup_{t\in\R}\norm{e^{-itH_x}\psi}_{L^p}<\infty.
	\]
	Therefore, dominated convergence theorem yields
	\begin{equation}
		\lim_{h\downarrow0}h^{m\sigma-2}
		\cC^-_{\la(x),\sigma}(\varphi_{h,\psi})
		=D^-_{\sigma,g}\int_{\R^n}\la(x)|\psi(x)|^p\dd x,
		\label{eq:x-functional-limit}
	\end{equation}
	where
	\begin{equation}
		D^-_{\sigma,g}
		:=\int_{-\infty}^0\norm{e^{is\Delta_y/2}g}_{L^p}^p\dd s
		\in(0,\infty).
		\label{eq:D-def}
	\end{equation}
	Thus the incoming wave operator determines the nonlinear testing functional
	\[
	Q_\la(\psi)=\int_{\R^n}\la(x)|\psi(x)|^p\dd x.
	\]
	
	Now pointwise reconstruction of $\lambda$ from $Q_\la$ is based on the standard localization argument.
	
	Indeed, for a fixed $x_0\in \R^n$, choose $0\ne\eta\in C_c^\infty(\R^n)$ and define
	\begin{equation}
		\psi_{\rho,x_0}(x)=\rho^{-n/p}
		\eta\left(\frac{x-x_0}{\rho}\right).
		\label{eq:x-localizer}
	\end{equation}
	Since $\lambda\in W^{1,\infty}(\R^n)$, we may take its Lipschitz
	continuous representative. Hence, for every $x_0\in\R^n$,
	\[
	\lambda(x_0+\rho z)\to\lambda(x_0)
	\qquad\text{as }\rho\downarrow0,
	\]
	and
	\[
	|\lambda(x_0+\rho z)|\,|\eta(z)|^p
	\le \|\lambda\|_{L^\infty}|\eta(z)|^p.
	\]
	Therefore, by dominated convergence theorem,
	\[
	Q_\lambda(\psi_{\rho,x_0})=\int_{\R^n}\la(x_0+\rho z)|\eta(z)|^p\dd z
	\longrightarrow
	\lambda(x_0)\|\eta\|_{L^p}^p
	\qquad\text{as }\rho\downarrow0.
	\]
	
	This proves the first half of Theorem \ref{thm:main}.

	\section{The coefficient depends on the free variable:
		\texorpdfstring{$\la=\la(y)$}{lambda = lambda(y)}}
	
	Throughout this section, assume
	\begin{equation}
		\la\in W^{1,\infty}(\R^m;\R),
		\qquad
		\frac{4d}{(d+2)m}<\sigma,
		\qquad
		\sigma<\frac2{d-2}\quad(d\ge3).
		\label{eq:y-assumptions}
	\end{equation}
	The stronger lower bound is the price of controlling the factor
	$t\nabla_y\la$.
	
	\subsection{The new commutator term and strengthened embedding}
	
	Write $\cG(u)=\la(y)F(u)$.  The coefficient commutes with
	$A_0,A_1,A_2$, while
	\begin{align}
		A_3\cG(u)&=\la A_3F(u)-i(\nabla_y\la)F(u),
		\label{eq:y-A3}\\
		A_4\cG(u)&=\la A_4F(u)+it(\nabla_y\la)F(u).
		\label{eq:y-A4}
	\end{align}
	The $A_3$ error has no time growth.  The last term in \eqref{eq:y-A4} is the
	genuinely new difficulty.
	
	We define the weighted auxiliary norm
	\begin{equation}
		\norm u_{Y_1(J)}
		:=\norm{\langle t\rangle^{1/(2\sigma)}u}
		_{\ell^\Th L^\Omega L^K(J)},
		\label{eq:Y1-space}
	\end{equation}
	which is motivated by the inequality 
	\begin{equation}
		|t||u|^{2\sigma+1}
		\le\bigl(\langle t\rangle^{1/(2\sigma)}|u|\bigr)^{2\sigma}|u|.\label{eq:tu-estimate}
	\end{equation}
	
	Since
	$\langle t\rangle^{1/(2\sigma)}\ge1$, it is clear that 
	\[\|u\|_{Y(J)}\le\|u\|_{Y_1(J)}.\]
	
	Multiplying \eqref{eq:pointwise-decay} by
	$\langle t\rangle^{1/(2\sigma)}$ and repeating the derivation of
	\eqref{eq:local-Y} gives
	\begin{equation}
		\norm{\langle t\rangle^{1/(2\sigma)}u}
		_{L^\Omega(I_\gamma\cap J;L^K)}
		\le C\langle\gamma\rangle^{-a+1/(2\sigma)}\norm u_{X(J)}.
		\label{eq:Y1-local}
	\end{equation}
	Taking the $\ell^\Th$ norm gives
	\[
	\|u\|_{Y_1(J)}^\Th
	\le C\|u\|_{X(J)}^\Th
	\sum_{\gamma\in\mathbb Z}
	\langle\gamma\rangle^{-\Th(a-1/(2\sigma))}.
	\]
	The numerical series converges provided
	\begin{equation}
		\left[m\left(\frac12-\frac1K\right)-\frac1{2\sigma}\right]\Th>1.
		\label{eq:strong-summability}
	\end{equation}
	Consequently,
	\begin{equation} 
		\norm u_{Y_1(J)}\le C\norm u_{X(J)}
		\label{eq:Y1-embedding}
	\end{equation}
	under \eqref{eq:strong-summability}.  
	
	For $d\ge3$, substitution of \eqref{eq:high-d-exponents} gives
	\begin{equation}
		\Th\left(\frac md-\frac1{2\sigma}\right)
		=\frac{4m\sigma-2d}{2d-m(d-2)\sigma}.
	\end{equation}
	Condition \eqref{eq:strong-summability} is therefore equivalent to
	$$m(d+2)\sigma>4d.$$  
	
	For $d=2$, substitution of
	\eqref{eq:d2-exponents} and then taking $\eta>0$ sufficiently small gives
	$\sigma>2$. This motivates the lower bound in \eqref{eq:y-assumptions}. More specifically, in the cases
	\[
	(d,n,m)=(2,1,1),\ (3,1,2),\ (4,1,3),
	\]
	respective ranges are $\sigma>2$, $6/5<\sigma<2$, and
	$8/9<\sigma<1$.
	
	\subsection{Nonlinear estimates and forward problem}
	
	The ordinary terms are handled by
	\eqref{eq:space-Holder}--\eqref{eq:sequence-Holder}.  From
	\eqref{eq:y-A3},
	\begin{equation}
		\norm{(\nabla_y\la)|u|^{2\sigma}u}_{\ell^{P'}L^{Q'}L^{R'}(J)}
		\le\norm{\nabla_y\la}_\infty
		\norm u_{Y(J)}^{2\sigma}
		\norm u_{\ell^PL^QL^R(J)}.
		\label{eq:y-A3-estimate}
	\end{equation}
	For the difficult term in \eqref{eq:y-A4}, \eqref{eq:tu-estimate} and the definition of $Y_1$ give
	\begin{align}
		&\norm{t(\nabla_y\la)|u|^{2\sigma}u}
		_{\ell^{P'}L^{Q'}L^{R'}(J)}\notag\\
		&\qquad\le\norm{\nabla_y\la}_\infty
		\norm u_{Y_1(J)}^{2\sigma}
		\norm u_{\ell^PL^QL^R(J)}.
		\label{eq:y-A4-estimate}
	\end{align}
	Using \eqref{eq:Y1-embedding} (implying \eqref{eq:Y-embedding}) yields
	\begin{equation}
		\norm{\cG(u)}_{N(J)}
		\le C\norm\la_{W^{1,\infty}
			\norm u_{X(J)}^{2\sigma+1}.}
		\label{eq:y-strong-nonlinear}
	\end{equation}
	Without applying any vector field, the same difference argument as in the Subsection 3.1 gives
	\begin{equation}
		\norm{\cG(u)-\cG(v)}_{N_0(J)}
		\le C\norm\la_\infty
		(\norm u_{X(J)}^{2\sigma}+\norm v_{X(J)}^{2\sigma})d_J(u,v).
		\label{eq:y-weak-difference}
	\end{equation}
	
	The two nonlinear estimates above are exactly the same as their counterparts in the $x$-dependent case. Using the same Kato's fixed point argument, we obtain the following analogue of Proposition \ref{prop:xforward}.
	
	\begin{proposition}\label{prop:yforward}
		Under the assumptions of this section, for sufficiently small $f\in\Sigma$, the nonlinear equation
		has a unique small solution $u$ in $X(\R)$ satisfying the incoming condition and
		\begin{equation}
			\Om^-_{\la(y),\sigma}(f)
			=f-i\int_{-\infty}^0U(-t)
			[\la(y)|u(t)|^{2\sigma}u(t)]\dd t.
			\label{eq:y-wave-formula}
		\end{equation}
		Moreover, for $f=\varepsilon\varphi$ with fixed $\varphi\in\Sigma$ and sufficiently small $\varepsilon> 0$, the corresponding solution $u_\varepsilon$ satisfies
		\begin{equation}
			\norm{u_\varepsilon}_X\le C_\varphi\varepsilon,
			\qquad
			d_\R(u_\varepsilon,\varepsilon U(\cdot)\varphi)
			\le C_{\varphi,\la,\sigma}\varepsilon^{2\sigma+1}.
			\label{eq:y-small-bounds}
		\end{equation}
	\end{proposition}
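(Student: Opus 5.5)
The plan is to repeat the proof of Proposition~\ref{prop:xforward} verbatim, with the $x$-dependent nonlinear estimates replaced by \eqref{eq:y-strong-nonlinear} and \eqref{eq:y-weak-difference}. Define $\Phi_f$ as in \eqref{eq:x-fixed-map}, now with $\cG(u)=\la(y)|u|^{2\sigma}u$.

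\textbf{Step 1: the two bounds for $\Phi_f$.} The homogeneous estimate \eqref{eq:hom-Strichartz}, the strong retarded estimate \eqref{eq:strong-retarded} and \eqref{eq:y-strong-nonlinear} give the analogue of \eqref{eq:x-map-bound}. The weak retarded estimate \eqref{eq:weak-retarded} together with \eqref{eq:y-weak-difference} gives the contraction bound \eqref{eq:x-contraction}.

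\textbf{Step 2: the fixed point.} Set $M=2C_0\norm f_\Sigma$. For $\norm f_\Sigma$ small, $\Phi_f$ maps $B_M$ into itself and is a contraction for $d_\R$. Kato's closedness argument shows that $B_M$ is complete for $d_\R$. This yields existence of the solution and its uniqueness in the small ball.

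\textbf{Step 3: the incoming condition.} Write
\[
U(-t)u(t)-f=-i\int_{-\infty}^tU(-s)\cG(u(s))\dd s .
\]
Use $A_j(0)U(-s)=U(-s)A_j(s)$ and the dual estimate \eqref{eq:dual-negative} on $(-\infty,t]$. The resulting bound is controlled by the tail of $\norm{A_j\cG(u)}_{\ell^{P'}L^{Q'}L^{R'}}$. This tail tends to zero because $\cG(u)\in N(\R)$ by \eqref{eq:y-strong-nonlinear} and $P'<\infty$.

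This is the only step where the $y$-dependence really enters. The quantity $A_4\cG(u)$ contains the term $it(\nabla_y\la)F(u)$. Its finiteness in $\ell^{P'}L^{Q'}L^{R'}$ is exactly \eqref{eq:y-A4-estimate}, which relies on the weighted embedding \eqref{eq:Y1-embedding}. That embedding is guaranteed by the strengthened lower bound on $\sigma$ in \eqref{eq:y-assumptions}.

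Since $A_j(0)$ ranges over $\Id,-i\nabla_x,x,-i\nabla_y,y$, we obtain convergence in $\Sigma$. Evaluating the Duhamel formula at $t=0$ gives \eqref{eq:y-wave-formula}.

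\textbf{Step 4: the small-data bounds.} Take $f=\eps\varphi$. The self-map property of Step 2 gives $\norm{u_\eps}_X\le 2C_0\eps\norm\varphi_\Sigma$. For the second bound, start from
\[
u_\eps-\eps U(\cdot)\varphi=-i\int_{-\infty}^tU(t-s)\cG(u_\eps)\dd s .
\]
By \eqref{eq:weak-retarded} and $N_0\le N$,
\[
d_\R\le C\norm{\cG(u_\eps)}_{N}\le C\norm\la_{W^{1,\infty}}\norm{u_\eps}_X^{2\sigma+1}\le C\eps^{2\sigma+1}.
\]

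The main obstacle is already absorbed in \eqref{eq:y-strong-nonlinear}: controlling the time-growing commutator in $A_4$ so that $\cG(u)$ lies in the full space $N(\R)$. Given that estimate, the proof reduces to the argument of Proposition~\ref{prop:xforward}. The only care needed is that the constant $C_1$ now also depends on the $Y_1$ embedding constant.
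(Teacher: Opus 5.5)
Your proposal is correct and follows the paper's route. The paper likewise notes that \eqref{eq:y-strong-nonlinear} and \eqref{eq:y-weak-difference} have the same form as their $x$-counterparts, and it simply reruns the Kato fixed-point argument of Proposition~\ref{prop:xforward}, exactly as in your Steps 1--4. One small correction to your commentary: the time-growing term $it(\nabla_y\la)F(u)$ is already needed in Step 1 (the self-map of $B_M$ in the strong $X$ norm), not only in Step 3, though both steps rest on the same bound \eqref{eq:y-strong-nonlinear}, so the argument is unaffected.
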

	
	\subsection{Recovery of \texorpdfstring{$\la(y)$}{lambda(y)}}
	
	Define
	\begin{equation}
		\cC^-_{\la(y),\sigma}(\varphi)
		=\int_{-\infty}^0\int_{\R^n\times\R^m}
		\la(y)|U(t)\varphi(x,y)|^{2\sigma+2}\dd x\dd y\dd t.
		\label{eq:y-leading-functional}
	\end{equation}
	Exactly as in the proof of the $x$-dependent case,
	\begin{align}
		\Om^-_{\la(y),\sigma}(\varepsilon\varphi)
		={}&\varepsilon\varphi
		-i\varepsilon^{2\sigma+1}\int_{-\infty}^0U(-t)
		[\la(y)|U(t)\varphi|^{2\sigma}U(t)\varphi]\dd t
		+R_\varepsilon,
		\label{eq:y-Born}\\
		\norm{R_\varepsilon}_2
		&\le C_{\varphi,\la,\sigma}\varepsilon^{4\sigma+1}.
		\label{eq:y-remainder}
	\end{align}
	Therefore, with
	\begin{equation}
		B_\varphi(\varepsilon)
		:=i\ip{(\Om^-_{\la(y),\sigma}-\Id)(\varepsilon\varphi)}{\varphi},
		\label{eq:y-B-def}
	\end{equation}
	we have
	\begin{equation}
		B_\varphi(\varepsilon)
		=\varepsilon^{2\sigma+1}\cC^-_{\la(y),\sigma}(\varphi)
		+O(\varepsilon^{4\sigma+1}).
		\label{eq:y-B-expansion}
	\end{equation}
	
	Hence, $\sigma$ can be determined in the same way as before. Compared with the recovery of $\lambda(x)$ in the previous section, we will see that the recovery of $\lambda(y)$ is more straightforward.
	
	Indeed, we can fix a nonzero Hermite eigenfunction $\psi\in\mathcal S(\R^n)$,
	and $0\ne g\in\mathcal S(\R^m)$.  
	Then
	\begin{equation}
		H_x\psi=E\psi, \label{Hermite}
	\end{equation}
	where $E$ is the corresponding eigenvalue.
	For $y_0\in\R^m$
	and $h>0$, let
	\begin{equation}
		g_{h,y_0}(y)=h^{-m/2}g\left(\frac{y-y_0}{h}\right),
		\qquad
		\varphi_{h,y_0}(x,y)=\psi(x)g_{h,y_0}(y).
		\label{eq:y-probe}
	\end{equation}
	Since 
	$$e^{-itH_x}\psi=e^{-itE}\psi,$$
	free scaling, followed by the substitutions $t=h^2s$ and
	$y=y_0+hz$ in \eqref{eq:y-leading-functional}, gives the identity
	\begin{align}
		h^{m\sigma-2}\cC^-_{\la(y),\sigma}(\varphi_{h,y_0})
		={}&\norm\psi_{L^p(\R^n)}^p
		\int_{-\infty}^0\int_{\R^m}
		\la(y_0+hz)|e^{is\Delta_y/2}g(z)|^p\dd z\dd s,
		\label{eq:y-scaled-functional}
	\end{align}
	where $p=2\sigma+2$. Dominated convergence yields
	\begin{equation}
		\lim_{h\downarrow0}h^{m\sigma-2}
		\cC^-_{\la(y),\sigma}(\varphi_{h,y_0})
		=\la(y_0)\norm\psi_{L^p}^pD^-_{\sigma,g}.
		\label{eq:y-functional-limit}
	\end{equation}
	
	This proves the second half of Theorem \ref{thm:main}.
	
	\section{Further remarks and extensions}
	
	\subsection*{Comparison of the two coefficients}
	To summarize, we make the following comparison between the two cases.
	
	\begin{center}
		\begin{tabular}{@{}p{0.25\textwidth}p{0.32\textwidth}p{0.32\textwidth}@{}}
			\toprule
			Feature & $\la=\la(x)$ & $\la=\la(y)$\\
			\midrule
			Additional differentiated terms
			& $(\sin t\text{ or }\cos t)(\nabla_x\la)F(u)$
			& $(\nabla_y\la)F(u)$ and $t(\nabla_y\la)F(u)$\\[1mm]
			Auxiliary embedding
			& $X\hookrightarrow Y$
			& $X\hookrightarrow Y_1\hookrightarrow Y$\\[1mm]
			Lower power bound
			& $\sigma>2d/((d+2)m)$
			& $\sigma>4d/((d+2)m)$\\[1mm]
			Recovery of $\lambda$
			& concentrate $y$ to freeze time, then localize $x$
			& concentrate $y$ directly at $y_0$\\
			
			\bottomrule
		\end{tabular}
	\end{center}
	
	\subsection*{Outgoing and scattering operators}
	We remark that a parallel theory can be developed for the outgoing wave operator and the scattering operator.
	Indeed, the estimates in earlier sections hold on positive time intervals. For small $f\in\Sigma$, the outgoing solution $u$ satisfies
	\begin{equation}\label{eq:outgoing}
		\begin{aligned}
			u(t)&=U(t)f+i\int_t^\infty U(t-s)\lambda|u(s)|^{2\sigma}u(s)\,ds,
		\end{aligned}
	\end{equation}
	and $\norm{U(-t)u(t)-f}_\Sigma\to0$ as $t\to+\infty$. Evaluating \eqref{eq:outgoing} at zero gives
	\begin{equation}\label{eq:outgoing-wave}
		\Omega^+_{\lambda,\sigma}(f)=u(0)
		=f+i\int_0^\infty U(-s)\lambda|u(s)|^{2\sigma}u(s)\,ds.
	\end{equation}
	In addition, the incoming solution in Proposition \ref{prop:xforward} or \ref{prop:yforward} has an outgoing asymptotic state. For small $f\in\Sigma$, the scattering map $$S_{\lambda,\sigma}=(\Omega^+_{\lambda,\sigma})^{-1}\circ\Omega^-_{\lambda,\sigma}$$ is therefore given by
	\begin{equation}\label{eq:scattering}
		S_{\lambda,\sigma}(f)=f-i\int_{\R}U(-t)\lambda|u(t)|^{2\sigma}u(t)\,dt.
	\end{equation}
	Based \eqref{eq:outgoing-wave} and \eqref{eq:scattering} instead of \eqref{eq:x-wave-formula} or \eqref{eq:y-wave-formula}, the reconstruction of $\lambda, \sigma$ is completely analogous in the outing and scattering settings.
	
	\subsection*{A coefficient depending on both $x$ and $y$}
	
	We further remark that it is possible to deal with the more general case $\lambda= \lambda(x, y)$. Note that the Hermite eigenfunction in \eqref{Hermite} is not crucial: it was chosen only to simplify the formula. In fact, a combination of arguments in Subsection 3.3 and Subsection 4.3 suggests a joint result.
	
	More precisely, for $\lambda=\lambda(x,y)\in W^{1,\infty}(\R^d;\R)$, the forward problem proof still works under the same stronger power condition as in Section 4. For the reconstruction of $\lambda(x,y)$,
	we fix $y_0$ and consider
	$$\phi_{h,\psi,y_0}(x,y)=\psi(x)h^{-m/2}g((y-y_0)/h)$$
	as in Subsection 4.3, but now $\psi$ is a general Schwartz function not necessarily a Hermite eigenfunction.
	The same argument leads to
	\begin{equation}\label{eq:joint}
		\lim_{h\downarrow0}h^{m\sigma-2}\mathcal C^-_{\lambda(x,y),\sigma}(\phi_{h,\psi,y_0})
		=D^-_{\sigma,g}\int_{\R^n}\lambda(x,y_0)|\psi(x)|^p\,dx.
	\end{equation}
	Then we localize at $x_0$ to recover $\lambda(x_0,y_0)$ following the argument in Subsection 3.3.

\end{document}